\theoremstyle{plain}
\newtheorem{thm}{Theorem}[section]
\newtheorem*{thm*}{Theorem}
\newtheorem{prop}[thm]{Proposition}
\newtheorem{lem}[thm]{Lemma}
\theoremstyle{definition}
\theoremstyle{remark}
\newtheorem{rem}{Remark}[section]
\newcommand{\vol}{\operatorname{vol}}
\newcommand{\diam}{\operatorname{Diam}}
\title[A universal inequality for Neumann eigenvalues]{A universal inequality for Neumann eigenvalues of the Laplacian on a convex domain in Euclidean space}
\author{Kei Funano}
\address{Division of Mathematics \& Research Center for Pure and Applied Mathematics, Graduate School of Information Sciences, Tohoku University, 6-3-09 Aramaki-Aza-Aoba, Aoba-ku, Sendai 980-8579, Japan}
\email{kfunano@tohoku.ac.jp}
\subjclass[2010]{35P15, 53C23, 58J50}
\keywords{Neumann eigenvalues of the Laplacian; Universal inequalities; Convex domain}
\date{\today}
\begin{document}
\maketitle

\begin{abstract}
We obtain a new upper bound for Neumann eigenvalues of the Laplacian on a bounded convex domain in Euclidean space. As an application of the upper bound we derive universal inequalities for Neumann eigenvalues of the Laplacian.
\end{abstract}

%%%%%%%%%%%%%%%%%%%%%%%%%%%%
%%%%%%%%%%%%%%%%%%%%%%%%%%%%
%%%%%%%%%%%%%%%%%%%%%%%%%%%%
\section{Introduction}

The purpose of this article is to give a new upper bound for Neumann eigenvalues of the Laplacian on a bounded convex domain in Euclidean space and a universal inequality for Neumann eigenvalues of the Laplacian. 
%%%%%%%%%%%%%%%%%%%%%%%%%%

Let $\Omega$ be a bounded domain in Euclidean space with piecewise smooth boundary. We denote by $\lambda_k(\Omega)$ the $k$th positive Neumann eigenvalues of the Laplacian on $\Omega$. For a finite sequence
$\{A_{\alpha}\}_{\alpha=0}^k$ of Borel subsets of $\Omega$ we set
\begin{align*}
\mathcal{D}(\{A_{\alpha}\}):=\min_{\alpha \neq \beta}
d(A_{\alpha},A_{\beta}),
 \end{align*}where $d(A_{\alpha},A_{\beta}):=\inf \{d(x,y) \mid x\in
 A_{\alpha},y\in A_{\beta}\}$ and $d$ is the Euclidean distance function. 

 Throughout this paper, we write $\alpha\lesssim \beta$ if $\alpha\leq c\beta$ for some universal concrete constant $c>0$ (which means $c$ does not depend on any parameters such as dimension and $k$ etc).

One of the main theorems in this paper is as follows.

\begin{thm}\label{Keythm} Let $\Omega$ be a bounded convex domain in $\mathbb{R}^n$ with piecewise smooth boundary and $\{A_{\alpha}\}_{\alpha=0}^k$ be a sequence of Borel subsets of $\Omega$. Then we have
\begin{align}\label{upest}
    \lambda_k(\Omega)\lesssim \frac{n^2}{(\mathcal{D}(\{A_{\alpha}\})\log (k+1))^2}\max_{\alpha=0,\cdots,k} \Big(\log \frac{\vol(\Omega)}{\vol(A_{\alpha})}\Big)^2.
\end{align}
\end{thm}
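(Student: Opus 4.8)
The plan is to bound $\lambda_k(\Omega)$ by the min--max characterization
$\lambda_k(\Omega)=\min\{\max_{u\in V\setminus\{0\}}\int_\Omega|\nabla u|^2/\int_\Omega u^2:\ V\subseteq H^1(\Omega),\ \dim V=k+1\}$,
so it suffices to produce a $(k+1)$-dimensional subspace on which the Rayleigh quotient is at most the right-hand side of \eqref{upest}. Write $\mathcal D:=\mathcal D(\{A_\alpha\})$; we may assume $\mathcal D>0$ and $\vol(A_\alpha)>0$ for every $\alpha$, as otherwise that right-hand side is infinite. The first attempt is to take the span of cutoffs $u_\alpha$ supported in the open neighborhoods $N_\alpha:=\{x\in\Omega:d(x,A_\alpha)<\mathcal D/2\}$, which are pairwise disjoint because the $A_\alpha$ are $\mathcal D$-separated, giving $\lambda_k(\Omega)\le\max_\alpha\big(\int_\Omega|\nabla u_\alpha|^2/\int_\Omega u_\alpha^2\big)$. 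Writing $u_\alpha=\phi_\alpha(d(\cdot,A_\alpha))$ for a decreasing profile with $\phi_\alpha(0)=1$, $\phi_\alpha(\mathcal D/2)=0$ and using the coarea formula reduces each Rayleigh quotient to a one-dimensional estimate against the volume-growth function $V_\alpha(t):=\vol(\{x\in\Omega:d(x,A_\alpha)<t\})$.

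Choosing $\phi_\alpha$ to decay geometrically across the shells of $N_\alpha$ on which $V_\alpha$ doubles --- the Chung--Grigor'yan--Yau device --- yields $\int_\Omega|\nabla u_\alpha|^2/\int_\Omega u_\alpha^2\lesssim\mathcal D^{-2}(\log(\vol(N_\alpha)/\vol(A_\alpha)))^2\le\mathcal D^{-2}(\log(\vol(\Omega)/\vol(A_\alpha)))^2$, hence the classical bound $\lambda_k(\Omega)\lesssim\mathcal D^{-2}\max_\alpha(\log(\vol(\Omega)/\vol(A_\alpha)))^2$. Since the $A_\alpha$ are disjoint, some $\vol(A_\alpha)\le\vol(\Omega)/(k+1)$, so $\max_\alpha\log(\vol(\Omega)/\vol(A_\alpha))\ge\log(k+1)$, and therefore the factor $n^2/\log^2(k+1)$ in \eqref{upest} is $\gtrsim1$ as soon as $\log(k+1)\lesssim n$; in that range the displayed estimate already implies \eqref{upest}. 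Thus the real content is the regime $\log(k+1)\gtrsim n$, where one must gain the extra factor $(n/\log(k+1))^2$ over the Chung--Grigor'yan--Yau bound.

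That gain is where the convexity of $\Omega$ enters, and it is the crux. Picking one point from each $A_\alpha$ produces a $\mathcal D$-separated subset of $\Omega$ of cardinality $k+1$, and for a convex body a packing estimate --- $(k+1)\vol(B(\mathcal D/2))\le\vol(\Omega+B(\mathcal D/2))\le(1+\mathcal D/(2\IR(\Omega)))^n\vol(\Omega)$ after centering a largest inball --- forces, in this regime, $\diam(\Omega)\gtrsim\mathcal D(k+1)^{1/n}$, i.e.\ $\log(k+1)\lesssim n\log(1+\diam(\Omega)/\mathcal D)$. Hence it is enough to prove the sharper, dimension-free estimate $\lambda_k(\Omega)\lesssim(\mathcal D\log(1+\diam(\Omega)/\mathcal D))^{-2}\max_\alpha(\log(\vol(\Omega)/\vol(A_\alpha)))^2$; for this, the disjoint cutoffs above are useless, since a transition confined to $N_\alpha$ forces Rayleigh quotient $\gtrsim\mathcal D^{-2}$, and instead one must spread the transition of each test function over a region of scale $\sim\mathcal D\log(1+\diam(\Omega)/\mathcal D)$, using the room guaranteed by convexity together with a Brunn--Minkowski / one-dimensional-localization control of the ratios $V_\alpha(t)/\vol(A_\alpha)$ to keep the Dirichlet energy small while still retaining $k+1$ essentially independent functions. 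The step I expect to be the main obstacle is exactly this spreading-out construction: the $A_\alpha$ are arbitrary Borel sets, so there is no control on $V_\alpha(t)$ beyond the trivial bound $\vol(\Omega)$, and Brunn--Minkowski-type inequalities point the wrong way for non-convex sets, so one first has to preprocess the configuration --- replacing each $A_\alpha$ by a regularized proxy of comparable volume, or adaptively cutting $\Omega$ into convex pieces matched to the separation pattern --- in a way that makes the convex-geometric input applicable without degrading $\mathcal D$ or the $\vol(A_\alpha)$ by more than universal constants, and so that the resulting gain is exactly of order $n/\log(k+1)$ rather than something weaker.
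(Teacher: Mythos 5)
Your reduction to the regime $\log(k+1)\gtrsim n$ is correct and you have correctly isolated the one thing that has to be proved, but you do not prove it: you write that the ``spreading-out construction'' giving the extra factor $(n/\log(k+1))^2$ over Chung--Grigor'yan--Yau is ``the step I expect to be the main obstacle,'' and the proposal stops there. As it stands this is a genuine gap, not a detail, because the easy regime (where CGY already suffices) carries no new content.

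The paper avoids that construction entirely. The author's Lemma~3.1 in the cited note [F3] (quoted in the paper as Lemma~\ref{keylem}) says: if a bounded convex $\Omega\subseteq\mathbb R^n$ contains $l+1$ points that are pairwise $r$-separated, then $r\lesssim n/\sqrt{\lambda_l(\Omega)}$, i.e.\ $\lambda_l(\Omega)\lesssim n^2/r^2$. With that black box in hand the theorem is two lines: pick one point $x_\alpha\in A_\alpha$ to get $(k+1)$ points that are $\mathcal D$-separated, deduce $\lambda_k(\Omega)\lesssim n^2/\mathcal D^2$; then, since the $A_\alpha$ are disjoint, some $\vol(A_\alpha)\le\vol(\Omega)/(k+1)$, so $\max_\alpha\log(\vol(\Omega)/\vol(A_\alpha))\ge\log(k+1)$, and multiplying by $\big(\max_\alpha\log(\vol(\Omega)/\vol(A_\alpha))/\log(k+1)\big)^2\ge1$ gives \eqref{upest}. (The paper phrases this as a contradiction argument, but the content is exactly the direct chain above.) In other words, the volume terms in \eqref{upest} only ever weaken the bound $\lambda_k\lesssim n^2/\mathcal D^2$; no test-function construction adapted to the $A_\alpha$ is needed at all, which is why the arbitrariness of the Borel sets --- the thing you worried would block the argument --- is a non-issue in the paper's route.

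So your proposal is on-theme (both approaches rest on a packing estimate for $\mathcal D$-separated configurations in a convex body, and the Bishop--Gromov input you invoke is the same one cited for Lemma~\ref{keylem}), but it tries to prove a stronger diameter-dependent inequality, namely $\lambda_k\lesssim\big(\mathcal D\log(1+\diam\Omega/\mathcal D)\big)^{-2}\max_\alpha\big(\log(\vol(\Omega)/\vol(A_\alpha))\big)^2$, by a CGY-style spreading construction that it does not carry out, whereas the paper simply quotes a previously established lemma that directly gives the dimension-dependent eigenvalue upper bound $\lambda_k\lesssim n^2/\mathcal D^2$ from separation. To complete your route you would have to actually establish your sharper inequality (or at least Lemma~\ref{keylem}); until then the proof is incomplete precisely at the point you flagged.
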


\begin{rem}The above theorem also holds for Neumann eigenvalues of the Laplacian on bounded convex domains in a manifold of nonnegative Ricci curvature. The proof only uses Lemma \ref{keylem} which follows from the Bishop-Gromov inequality.  
\end{rem}

In \cite{CGY1,CGY2} Chung-Grigory'an-Yau obtained 
\begin{align*}
    \lambda_k(\Omega)\lesssim \frac{1}{\mathcal{D}(\{A_{\alpha}\})^2}\max_{\alpha=0,\cdots,k} \Big(\log \frac{\vol(\Omega)}{\vol(A_{\alpha})}\Big)^2.
\end{align*}for a bounded (not necessarily convex) domain $\Omega$ and its Borel subsets $\{A_{\alpha}\}$ (See also \cite{FS,GH}). Compared to their inequality the inequality (\ref{upest}) is better for large $k$ if we fix $n$. Their inequality is better for large $n$ if we fix $k$.
Theorem \ref{Keythm} also gives an answer to Question 5.1 in \cite{F1} up to $n^2$ factor.

As an application of Theorem \ref{Keythm} we obtain the following universal inequality for Neumann eigenvalues of the Laplacian.
\begin{thm}\label{Mthm} Let $\Omega$ be a bounded convex domain in $\mathbb{R}^n$ with piecewise smooth boundary. Then we have
    \begin{align}\label{univ}
        \lambda_{k+1}(\Omega)\lesssim n^4 \lambda_k(\Omega).
    \end{align}
\end{thm}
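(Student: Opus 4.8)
The plan is to deduce the universal inequality $\lambda_{k+1}(\Omega)\lesssim n^4\lambda_k(\Omega)$ from the upper bound in Theorem \ref{Keythm} by choosing the Borel sets $\{A_\alpha\}_{\alpha=0}^{k}$ cleverly. The natural source of such sets is the spectral information already contained in $\lambda_k(\Omega)$: the eigenfunctions $\varphi_1,\dots,\varphi_k$ together with $\varphi_0\equiv\mathrm{const}$ span a $(k+1)$-dimensional space, and a standard argument (the "$(k+1)$ separated sets $\Leftrightarrow$ lower bound on $\lambda_{k+1}$" principle, as in Chung--Grigor'yan--Yau) shows that if one could find $k+1$ Borel sets $A_0,\dots,A_k$ that are pairwise separated by distance $\mathcal D$ and each of volume comparable to $\vol(\Omega)$, then $\lambda_{k}(\Omega)\gtrsim 1/\mathcal D^2$ would fail to hold — more precisely, the contrapositive: from $\lambda_k(\Omega)$ small one manufactures well-separated sets. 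So the first step is to invoke (or reprove in one line) the lower bound: there exist $k+1$ pairwise disjoint Borel sets $\{A_\alpha\}$ with
\begin{align*}
\mathcal D(\{A_\alpha\})\gtrsim \frac{1}{\sqrt{\lambda_k(\Omega)}}\cdot\frac{1}{(\text{something like }\log k\text{ or }1)}
\end{align*}
and $\vol(A_\alpha)\gtrsim \vol(\Omega)/(k+1)$ for every $\alpha$. The cleanest route is: take a maximal $\mathcal D$-separated net and the associated Voronoi-type partition, or alternatively use the known fact (again from \cite{CGY1,CGY2}) that $\lambda_k(\Omega)\gtrsim 1/\mathcal D(\{A_\alpha\})^2$ whenever the $A_\alpha$ are "large" (volume $\gtrsim \vol(\Omega)/k$, using convexity so that balls have volume $\gtrsim \vol(\Omega)/k$ once their radius is $\gtrsim \diam(\Omega)/k^{1/n}$). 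Convexity is what lets us control $\vol(A_\alpha)/\vol(\Omega)$ from below by a quantity depending only on the radius of an inscribed ball, via the Bishop--Gromov-type Lemma \ref{keylem} referenced in the paper.

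The second step is to feed these sets into Theorem \ref{Keythm}. With $\vol(\Omega)/\vol(A_\alpha)\lesssim k+1$ for every $\alpha$, the $\max$ term in \eqref{upest} is at most $(\log(k+1))^2$, so \eqref{upest} collapses to
\begin{align*}
\lambda_{k+1}(\Omega)\;\lesssim\; \frac{n^2}{(\mathcal D(\{A_\alpha\})\log(k+1))^2}\cdot(\log(k+1))^2\;=\;\frac{n^2}{\mathcal D(\{A_\alpha\})^2}.
\end{align*}
Combining with the lower bound $\mathcal D(\{A_\alpha\})^2\gtrsim 1/(n^2\lambda_k(\Omega))$ from step one then yields $\lambda_{k+1}(\Omega)\lesssim n^4\lambda_k(\Omega)$, which is exactly \eqref{univ}. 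The two factors of $n^2$ are clearly visible here: one from \eqref{upest} directly, and one from the loss in producing large separated sets out of a sub-level set of an eigenfunction in $\mathbb R^n$ (a ball of proportional volume has radius shrinking like $k^{-1/n}$, costing $n$ in the exponent, hence $n^2$ in the squared distance).

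The main obstacle I expect is the quantitative construction in step one: turning a spectral gap bound into $k+1$ pairwise-separated sets \emph{each of nearly maximal volume}, with the separation distance controlled from below by $1/\sqrt{\lambda_k(\Omega)}$ up to the right power of $n$. Getting the volumes to all be $\gtrsim \vol(\Omega)/(k+1)$ simultaneously — rather than just on average — is the delicate point; here convexity is essential, since it forces the domain to be "spread out" and lets one use an inscribed-ball/packing argument (John ellipsoid, or a direct bisection-type partition of a convex body into $k+1$ pieces of equal volume and bounded diameter ratio) to guarantee uniform lower volume bounds. One must also check that the logarithmic factors in the separation lower bound cancel against the $\log(k+1)$ in the denominator of \eqref{upest}, rather than leaving a residual $\log$ — the architecture of \eqref{upest}, with its $(\log(k+1))^2$ built into the denominator, is evidently designed precisely so that this cancellation is exact, which is why the clean $n^4$ (with no logarithmic correction) comes out.
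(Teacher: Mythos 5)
Your plan and the paper's proof share one ingredient — feeding separated sets into Theorem~\ref{Keythm} (via Lemma~\ref{lelemma}) to control $\lambda_{k+1}$ — but the logical architecture is different and your Step~1 has a genuine gap.

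The crucial flaw is in Step~1. You assert, as a ``known fact,'' that $\lambda_k(\Omega)\gtrsim 1/\mathcal D(\{A_\alpha\})^2$ whenever the $A_\alpha$ are large well-separated sets, and you propose to run this backwards (``from $\lambda_k$ small one manufactures well-separated sets''). But the Chung--Grigor'yan--Yau inequality only gives \emph{upper} bounds on $\lambda_k$ from separated sets; its contrapositive does not produce separated sets from a small eigenvalue. You are conflating two distinct constructions: separated Borel subsets (small, far apart — input to Theorem~\ref{Keythm}) and Voronoi \emph{partitions} (cells that cover $\Omega$ with small diameter — input to the Payne--Weinberger/domain-monotonicity lower bound of Proposition~\ref{lower}). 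These play complementary roles and are not interchangeable. Moreover, the quantitative claim $\vol(A_\alpha)\gtrsim\vol(\Omega)/(k+1)$ for each of $k+1$ sets that are also $\gtrsim 1/(n\sqrt{\lambda_k})$-separated is not achievable in general: Bishop--Gromov with Kr\"oger's diameter bound only yields $\vol(B(x_\alpha,R)\cap\Omega)/\vol(\Omega)\gtrsim 1/(k+1)^n$, not $1/(k+1)$; the exponent $n$ is unavoidable and is precisely the source of one factor of $n^2$ in the paper.

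The paper's actual argument is a dichotomy that you never set up. Fix $R=cn^2/\sqrt{\lambda_{k+1}(\Omega)}$ (note: calibrated to $\lambda_{k+1}$, not $\lambda_k$). If $\Omega$ contains $k+1$ $R$-separated points, then Kr\"oger's Theorem~\ref{Cheng} bounds $\diam(\Omega)\lesssim n(k+1)/\sqrt{\lambda_{k+1}}$, Bishop--Gromov gives volume fraction $\geq 1/(k+1)^n$ for the balls, and Lemma~\ref{lelemma} with $\log((k+1)^n)=n\log(k+1)$ cancelling the $\log(k+2)$ denominator gives $\lambda_{k+1}\lesssim n^4/R^2 = \lambda_{k+1}/c$, a contradiction for large $c$. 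Therefore a maximal $R$-separated set has at most $k$ points; the associated Voronoi cells $\Omega_\alpha$ have $\diam(\Omega_\alpha)\leq 2R$, Payne--Weinberger (Theorem~\ref{PW est}) gives $\lambda_1(\Omega_\alpha)\gtrsim 1/R^2$, and Proposition~\ref{lower} yields $\lambda_k(\Omega)\gtrsim 1/R^2\gtrsim\lambda_{k+1}(\Omega)/n^4$. Your plan omits Payne--Weinberger, Proposition~\ref{lower}, and Kr\"oger's theorem altogether, and replaces the dichotomy by an unsubstantiated ``manufacture large separated sets from $\lambda_k$'' step. As written, the proposal does not close the argument.
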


Related with (\ref{univ}) the author conjectured in \cite{F0,F2} that 
\begin{align*}
    \lambda_{k+1}(\Omega)\lesssim  \lambda_k(\Omega) 
\end{align*}holds under the same assumption of Theorem \ref{Mthm}. In \cite[(1.3)]{F1} the author proved that 
\begin{align*}
    \lambda_{k+1}(\Omega)\lesssim (n\log k)^2\lambda_k(\Omega)
\end{align*}for a bounded convex domain $\Omega$. The inequality (\ref{univ}) avoids the dependence of $k$ for the upper bound of the ratios $\lambda_{k+1}(\Omega)/\lambda_k(\Omega)$ and gives a better inequality if $\log k \geq n$.
In \cite{F0,F2} the author proved a dimension-free universal inequality $\lambda_k(\Omega)\lesssim c^k \lambda_1(\Omega)$ for a bounded convex domain in $\mathbb{R}^n$ and for some universal constant $c>1$. In \cite[Theorem 1.5]{Liu} Liu showed an optimal universal inequality $\lambda_k(\Omega)\lesssim k^2\lambda_1(\Omega)$ under the same assumption. Thus $n^2$ factor is not needed for small $k$ (e.g., $k=2,3$) in (\ref{univ}). 
As mentioned in \cite[(1.5)]{F1} combining E.~Milman's result \cite{Mi1} with Cheng-Li's result \cite{CL} one can obtain $\lambda_k(\Omega)\gtrsim k^{2/n}\lambda_1(\Omega)$ under the same assumption. Together with Liu's inequality this shows
\begin{align*}
    \lambda_{k+1}(\Omega)\lesssim k^{2-2/n}\lambda_k(\Omega).
\end{align*}The inequality (\ref{univ}) is better than this inequality for large $k$ if we fix $n$. This inequality is better for large $n$ if we fix $k$.
%%%%%%%%%%%%%%%%%%%%%%%%%%

\section{Preliminaries}

We collect several results to use in the proof of our theorems.
\begin{prop}[{\cite[8.2.1 Theorem]{B}}]\label{lower}Let $\Omega$ be a bounded domain in a Euclidean space with piecewise smooth
    boundary and $\{\Omega_{\alpha}\}_{\alpha=0}^{l}$ be a finite partition of $\Omega$ by subdomains in the sense that $\vol (\Omega_\alpha \cap \Omega_{\beta})=0$ for each different $\alpha,\beta$. Then we have
\begin{align*}
    \lambda_{l+1}(\Omega)\geq \min_{\alpha}\lambda_1(\Omega_{\alpha}).
\end{align*}
 \end{prop}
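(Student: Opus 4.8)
The plan is to use the variational (min-max) characterization of Neumann eigenvalues together with a test-function construction adapted to the partition. Recall that
\[
\lambda_{l+1}(\Omega)=\inf_{E}\sup_{0\neq u\in E}\frac{\int_\Omega |\nabla u|^2}{\int_\Omega u^2},
\]
where the infimum runs over $(l+2)$-dimensional subspaces $E$ of the Neumann Sobolev space $H^1(\Omega)$ containing the constants, or equivalently
\[
\lambda_{l+1}(\Omega)=\sup_{F}\inf_{0\neq u\perp F}\frac{\int_\Omega|\nabla u|^2}{\int_\Omega u^2},
\]
with $F$ ranging over $(l+1)$-dimensional subspaces. I would work with the first form. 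For each $\alpha=0,\dots,l$, let $\varphi_\alpha$ be a first nonconstant Neumann eigenfunction on the subdomain $\Omega_\alpha$, normalized with $\int_{\Omega_\alpha}\varphi_\alpha=0$, and extend it by zero to all of $\Omega$; call the extension $\widetilde{\varphi_\alpha}$. The key point is that $\widetilde{\varphi_\alpha}\in H^1(\Omega)$: since $\varphi_\alpha$ satisfies the Neumann condition on $\partial\Omega_\alpha$, its zero-extension does not create a distributional boundary term, so $\int_\Omega |\nabla\widetilde{\varphi_\alpha}|^2=\int_{\Omega_\alpha}|\nabla\varphi_\alpha|^2=\lambda_1(\Omega_\alpha)\int_{\Omega_\alpha}\varphi_\alpha^2$.

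Next I would let $E:=\operatorname{span}\{1,\widetilde{\varphi_0},\dots,\widetilde{\varphi_l}\}\subset H^1(\Omega)$. The functions $\widetilde{\varphi_\alpha}$ have pairwise disjoint (up to measure zero) supports, so they are $L^2$-orthogonal to each other, and each has mean zero so is $L^2$-orthogonal to the constant function $1$; hence $\dim E=l+2$. For any $u=c\cdot 1+\sum_\alpha a_\alpha\widetilde{\varphi_\alpha}\in E$, disjointness of supports gives $\int_\Omega|\nabla u|^2=\sum_\alpha a_\alpha^2\lambda_1(\Omega_\alpha)\int_{\Omega_\alpha}\varphi_\alpha^2\leq (\max_\alpha\lambda_1(\Omega_\alpha))\sum_\alpha a_\alpha^2\int_{\Omega_\alpha}\varphi_\alpha^2$, while $\int_\Omega u^2=c^2\vol(\Omega)+\sum_\alpha a_\alpha^2\int_{\Omega_\alpha}\varphi_\alpha^2\geq \sum_\alpha a_\alpha^2\int_{\Omega_\alpha}\varphi_\alpha^2$. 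Therefore the Rayleigh quotient of every nonzero $u\in E$ is at most $\max_\alpha\lambda_1(\Omega_\alpha)$, and the min-max principle yields $\lambda_{l+1}(\Omega)\leq\max_\alpha\lambda_1(\Omega_\alpha)$.

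Wait — the claimed inequality is the reverse, $\lambda_{l+1}(\Omega)\geq\min_\alpha\lambda_1(\Omega_\alpha)$, so the construction above must be dualized. Instead I would use the second (sup-inf) form: take $F$ to be the $(l+1)$-dimensional space spanned by the functions that are constant on each $\Omega_\alpha$ (equivalently, the span of the indicator functions $\mathbf{1}_{\Omega_\alpha}$). For $u\in H^1(\Omega)$ with $u\perp F$, one has $\int_{\Omega_\alpha}u=0$ for every $\alpha$, so on each piece $u$ is a valid test function for $\lambda_1(\Omega_\alpha)$, giving $\int_{\Omega_\alpha}|\nabla u|^2\geq\lambda_1(\Omega_\alpha)\int_{\Omega_\alpha}u^2\geq(\min_\beta\lambda_1(\Omega_\beta))\int_{\Omega_\alpha}u^2$. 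Summing over $\alpha$ yields $\int_\Omega|\nabla u|^2\geq(\min_\beta\lambda_1(\Omega_\beta))\int_\Omega u^2$, and taking the infimum over such $u$ and then recalling $\lambda_{l+1}(\Omega)\geq\inf_{u\perp F}(\text{Rayleigh quotient})$ gives the claim. The main technical point to verify carefully is that restriction $H^1(\Omega)\to H^1(\Omega_\alpha)$ is well-defined and norm-nonincreasing on both the gradient and the $L^2$ parts, which is where piecewise smoothness of the boundary and the subdomain structure (so that $\sum_\alpha\vol(\Omega_\alpha)=\vol(\Omega)$ and gradients add up) are used; this is standard but is the one place the hypotheses genuinely enter. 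Since this is a cited result from Berger's book, I would simply reference \cite{B} for these details.
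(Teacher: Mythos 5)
The paper offers no proof of this proposition, citing it to Buser's book \cite{B} (note: Buser, not Berger). Your second, sup-inf argument is correct and is in fact the standard proof of this ``Neumann bracketing'' estimate: take $F=\operatorname{span}\{\mathbf{1}_{\Omega_0},\dots,\mathbf{1}_{\Omega_l}\}$, observe that $u\perp F$ in $L^2$ forces $\int_{\Omega_\alpha}u=0$ so that $u|_{\Omega_\alpha}$ is admissible for $\lambda_1(\Omega_\alpha)$, apply the Poincar\'e inequality on each piece, and sum. One point worth making explicit: the max-min formula for quadratic forms permits $F$ to be an arbitrary finite-dimensional subspace of $L^2(\Omega)$ rather than of $H^1(\Omega)$; this matters since the indicator functions are not in $H^1(\Omega)$.

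However, the abandoned first half of your attempt contains a genuine error independent of the sign issue you caught. The zero-extension of a Neumann eigenfunction $\varphi_\alpha$ from $\Omega_\alpha$ to $\Omega$ is in general \emph{not} in $H^1(\Omega)$. The Neumann condition says the normal derivative $\partial_\nu\varphi_\alpha$ vanishes on $\partial\Omega_\alpha$, not $\varphi_\alpha$ itself; since $\varphi_\alpha$ typically does not vanish there, the zero-extension has a jump across $\partial\Omega_\alpha$ and its distributional gradient acquires a singular measure supported on that interface, so the claimed identity $\int_\Omega|\nabla\widetilde\varphi_\alpha|^2=\int_{\Omega_\alpha}|\nabla\varphi_\alpha|^2$ fails (indeed $\widetilde\varphi_\alpha\notin H^1(\Omega)$). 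That extension trick is valid for \emph{Dirichlet} eigenfunctions; conflating the two boundary conditions is the flaw. The correct test space for an upper bound in the other direction is the span of the indicators themselves, which is precisely what makes the dual form natural here. Since you did discard that branch, your final proof stands, but the misconception about Neumann zero-extension is worth correcting.
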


 Refer to \cite[Appendix $C_{+}$]{G} for a weak form of the above proposition.

 \begin{thm}[{\cite[$(1.2)$]{PW}}]\label{PW est}Let $\Omega$ be a bounded convex domain in a Euclidean space with piecewise smooth boundary. Then we have 
 \begin{align*}
     \lambda_1(\Omega)\geq \frac{\pi^2}{\diam (\Omega)^2}.
 \end{align*}
 \end{thm}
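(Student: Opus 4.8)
\emph{Approach.} I would prove the Payne--Weinberger estimate by the classical reduction of the convex Neumann problem to a one-dimensional weighted problem. Write $d=\diam(\Omega)$ and let $u$ be a first nonconstant Neumann eigenfunction, so that $\int_\Omega u=0$ and $\lambda_1(\Omega)\int_\Omega u^2=\int_\Omega|\nabla u|^2$. Since $\lambda_1$ of a bounded domain is the minimum of the Rayleigh quotient over mean-zero $H^1$ functions, it suffices to produce, for every $\varepsilon>0$, a finite partition of $\Omega$ into convex subdomains $\Omega_i$ with $\int_{\Omega_i}u=0$ on each and $\lambda_1(\Omega_i)\ge \pi^2/d^2-\varepsilon$: summing $\int_{\Omega_i}|\nabla u|^2\ge \lambda_1(\Omega_i)\int_{\Omega_i}u^2$ over $i$ and letting $\varepsilon\downarrow0$ then gives $\lambda_1(\Omega)\ge\pi^2/d^2$.

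\emph{Bisection into needles.} First I would run the ham-sandwich bisection: for a convex body $K$ and $g\in L^1(K)$ with $\int_K g=0$, a continuity argument yields a hyperplane splitting $K$ into convex pieces $K',K''$ with $\int_{K'}g=\int_{K''}g=0$. Applying this repeatedly to $u$, and always choosing the cut so as to reduce the transverse width of the current cell, after boundedly many iterations one obtains, for any prescribed $\delta>0$, a finite partition $\{\Omega_i\}$ of $\Omega$ into convex cells, each with $\int_{\Omega_i}u=0$, each of diameter $\le d$ (automatically, as $\Omega_i\subset\Omega$), and each \emph{needle-like}: there is a unit vector $e_i$ such that the orthogonal projection of $\Omega_i$ onto $e_i^{\perp}$ has diameter $\le\delta$.

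\emph{Reduction to one dimension.} Fix a cell $\Omega_i$, take $e_i=e_1$, let $[0,\ell]$ with $\ell\le d$ be its projection onto the $x_1$-axis, $\Omega_i^t$ the slice over $t$, $w(t)$ the $(n-1)$-dimensional volume of $\Omega_i^t$, and $\bar u(t)=w(t)^{-1}\int_{\Omega_i^t}u$. Convexity of $\Omega_i$ makes $w^{1/(n-1)}$ concave on $[0,\ell]$, and $\int_0^\ell w\bar u\,dt=\int_{\Omega_i}u=0$. A fiberwise one-dimensional estimate, combined with a \emph{crude} Poincar\'e inequality on the $(n-1)$-dimensional convex slices --- any bound of the form ``the Poincar\'e constant of a convex set of diameter $\delta$ is at most $C(n)\delta^2$'', which is elementary and, importantly, not the sharp estimate we are after --- bounds the Rayleigh quotient of $u$ on $\Omega_i$ from below by
\begin{align*}
\frac{\int_0^\ell w(t)\,\bar u'(t)^2\,dt}{\int_0^\ell w(t)\,\bar u(t)^2\,dt}-C(n)\,\delta^2\,\lambda_1(\Omega),
\end{align*}
where the subtracted term accounts for the total transverse error summed over all cells.

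\emph{The one-dimensional Wirtinger step, and the main obstacle.} It remains to prove the one-dimensional weighted Poincar\'e inequality: if $w\ge0$ on $[0,\ell]$ with $w^{1/(n-1)}$ concave and $\int_0^\ell wf=0$, then $\int_0^\ell w(f')^2\ge(\pi^2/\ell^2)\int_0^\ell wf^2$. \textbf{This is the step I expect to be the main obstacle.} I would argue in the Sturm--Liouville fashion: the extremal $f$ solves $(wf')'+\mu wf=0$ with Neumann endpoints, and one shows that the first nonzero eigenvalue $\mu(w)$, taken over all admissible concave-power densities on an interval of length $\ell$, is minimized by the constant density --- for which the inequality is exactly Wirtinger's on $[0,\ell]$ with sharp constant $\pi^2/\ell^2$. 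This extremality is where the concavity of $w^{1/(n-1)}$ (and hence the constant $\pi^2$) genuinely enters; one convenient packaging is Bebendorf's reduction, but any ODE-comparison route works. Granting it, $\lambda_1(\Omega_i)\ge\pi^2/\ell^2-C(n)\delta^2\lambda_1(\Omega)\ge\pi^2/d^2-C(n)\delta^2\lambda_1(\Omega)$ for each cell; summing the cellwise Rayleigh inequalities and letting $\delta\to0$ finishes the proof.
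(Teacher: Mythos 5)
The paper does not prove this statement: Theorem \ref{PW est} is quoted from Payne--Weinberger \cite{PW} and used as a black box, so there is no internal proof to compare against. Your sketch is the standard proof of that classical result, and its architecture --- ham-sandwich bisection of $\Omega$ into convex needles on which $u$ still has zero mean, reduction to a one-dimensional weighted problem whose density $w$ has $w^{1/(n-1)}$ concave by Brunn--Minkowski, and a weighted Wirtinger inequality --- is the right one.

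That said, the two steps you assert rather than prove are exactly where all the work lies. First, the needle decomposition: a balancing hyperplane cannot have both its position and its direction prescribed, so ``always choosing the cut so as to reduce the transverse width'' is not literally available. The correct tool is that for any $2$-plane of directions there is a bisecting hyperplane whose normal lies in that plane (a rotating-hyperplane intermediate-value argument), and showing that finitely many such cuts force every cell to be $\delta$-thin in $n-1$ directions is nontrivial --- it is the step that was incomplete in the original \cite{PW} for $n\geq 3$ and was only repaired later (Bebendorf). ``Boundedly many iterations'' is also too strong: the number of cuts depends on $\delta$ and on $\Omega$; finiteness is all one gets and all one needs. Second, the one-dimensional inequality $\int_0^\ell w(f')^2\,dt\geq (\pi^2/\ell^2)\int_0^\ell wf^2\,dt$ under $\int_0^\ell wf\,dt=0$ is the heart of \cite{PW}; you correctly flag it as the main obstacle, but the statement that the constant density minimizes the weighted Neumann gap among admissible densities is a reformulation of the lemma, not a proof of it --- the Sturm--Liouville comparison still has to be carried out. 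With those two lemmas supplied, your fiberwise reduction and the $O(\delta^2)$ error bookkeeping are standard and the argument closes; note also that for the purposes of this paper only $\lambda_1(\Omega)\gtrsim \diam(\Omega)^{-2}$ is ever used, so the sharp constant $\pi^2$ is not actually needed downstream.
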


Combining Proposition \ref{lower} with Theorem \ref{PW est} in order to give a `good' lower bound for Neumann eigenvalues of the Laplacian it is enough to provide a `good' finite convex partition of the domain.

For an upper bound of Neumann eigenvalues we mention the following theorem.
 \begin{thm}[{\cite[Theorem 1.1]{Kr}}]\label{Cheng}Let $\Omega$ be a bounded convex domain in $\mathbb{R}^n$ with piecewise smooth boundary. For any natural number $k$ we have
     \begin{align*}
         \lambda_k(\Omega)\lesssim \frac{n^2k^2}{\diam (\Omega)^2}.
     \end{align*}
 \end{thm}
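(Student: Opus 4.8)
The plan is to reduce the $n$-dimensional problem to a one-dimensional weighted Neumann eigenvalue problem by projecting $\Omega$ onto a segment realising its diameter, and then to analyse that problem by a handful of explicit test functions. Fix a unit vector $e$ along which $\diam(\Omega)$ is attained, put $\ell(x):=\langle x,e\rangle$, and normalise so that $\ell(\Omega)=(0,D)$ with $D:=\diam(\Omega)$ (this projection has length exactly $D$, since the projection of any set of diameter $D$ onto a line has length $\le D$). Let $A(t):=\mathcal{H}^{n-1}\big(\Omega\cap\ell^{-1}(t)\big)$ and let $\nu$ be the measure $A(t)\,dt$ on $[0,D]$. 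Since $|\nabla\ell|\equiv1$, the coarea formula gives, for any $\phi$ on $[0,D]$,
\begin{align*}
\int_\Omega|\nabla(\phi\circ\ell)|^2\,dx=\int_0^D|\phi'(t)|^2A(t)\,dt,\qquad \int_\Omega(\phi\circ\ell)^2\,dx=\int_0^D\phi(t)^2A(t)\,dt,
\end{align*}
so $\phi\circ\ell$ has the same Rayleigh quotient on $\Omega$ as $\phi$ on $([0,D],\nu)$; applying the Courant--Fischer principle to the $(k+1)$-dimensional space $\{\phi\circ\ell\}$ we get $\lambda_k(\Omega)\le\lambda_k([0,D],\nu)$, the $k$-th nonzero Neumann eigenvalue of the weighted operator $\phi\mapsto-A^{-1}(A\phi')'$. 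By the Brunn--Minkowski inequality the density $g:=A^{1/(n-1)}$ is concave on $[0,D]$; in the model case $g(t)=ct$ this operator is the radial Laplacian on a Euclidean ball, whose eigenvalues are of order $n^2/D^2$ (the relevant Bessel zeros growing linearly in the order $\tfrac n2-1$), which is the source of the factor $n^2$.

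For the one-dimensional problem I would use the disjoint-support form of Courant--Fischer: choosing $0=s_0<s_1<\cdots<s_{k+1}=D$ and test functions $\phi_i$ supported in $[s_i,s_{i+1}]$ yields $\lambda_k([0,D],\nu)\le\max_i\mu_i$, where $\mu_i$ is the first eigenvalue on $[s_i,s_{i+1}]$ with a Dirichlet condition at each interior endpoint and a Neumann condition at $0$ or $D$. For $k=1$ one takes directly the test function $t\mapsto t-\bar t$, $\bar t:=\nu([0,D])^{-1}\int_0^Dt\,d\nu$, whose Rayleigh quotient equals $1/\mathrm{Var}_\nu(t)\lesssim n^2/D^2$ because an $(n-1)$-concave measure on $[0,D]$ has variance $\gtrsim D^2/n^2$. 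For $k\ge2$ I would pick the $s_i$ so that the two outer intervals $[0,s_1],[s_k,D]$ have length $\asymp D/k$ and the remaining $k-1$ intervals partition $[s_1,s_k]$ into pieces of length $\lesssim D/k$. On each bulk piece $g$ is bounded below, and the substitution $\phi=A^{-1/2}\psi$ turns the weighted Dirichlet problem into a Schrödinger one $-\psi''+V\psi$ with
\begin{align*}
V=\frac{(n-1)(n-3)}{4}\Big(\frac{g'}{g}\Big)^2+\frac{n-1}{2}\,\frac{g''}{g}\le\frac{(n-1)^2}{4}\Big(\frac{g'}{g}\Big)^2,
\end{align*}
the last step using $g''\le0$. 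Concavity also gives $|g'(t)|/g(t)\le\max\{1/t,\,1/(D-t)\}\le k/D$ on $[s_1,s_k]$, so $V\le(n-1)^2k^2/(4D^2)$ there, and $\mu_i\le\pi^2/(\text{length})^2+\max V\lesssim n^2k^2/D^2$ on every bulk piece. On the outer pieces one uses the explicit test function $\phi_0(t)=g(s_1)-g(t)$ (and its mirror image on $[s_k,D]$); its Rayleigh quotient is $\int_0^{s_1}(g')^2g^{n-1}\big/\int_0^{s_1}(g(s_1)-g)^2g^{n-1}$, and a direct estimate exploiting concavity of $g$ bounds it by $\lesssim n^2/s_1^2\lesssim n^2k^2/D^2$, the extremal case being the linear profile, i.e.\ a Euclidean cone.

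The main obstacle is exactly the behaviour near a degenerate endpoint, where $A$ may vanish like $t^{n-1}$: there the weight oscillates by a factor as large as $2^{n-1}$ over an interval comparable to its distance from the tip, so a naive weighted Poincaré inequality on short pieces is useless and one must genuinely use the $(n-1)$-concavity of $g$. Concretely, the crux is to show that the first mixed (Neumann-at-the-tip) eigenvalue of $-A^{-1}(A\phi')'$ on $[0,s]$ is $\lesssim n^2/s^2$ \emph{uniformly in $n$} — equivalently, that the mass of a Euclidean cone of height $s$ concentrates within distance $\lesssim s/n$ of its base, which is the Beta-integral asymptotic $B(n,3)/B(n,1)\asymp n^{-2}$ and is the Euclidean shadow of the Bessel bound $j_{n/2-1,1}\lesssim n$. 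Verifying that the linear profile is the extremal concave $g$ in this tip estimate (e.g.\ by a rearrangement/monotone-reduction argument) is the one genuinely delicate point; the rest is routine bookkeeping of the $k+1$ Rayleigh quotients.
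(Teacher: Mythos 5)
The paper offers no proof of this statement---it is quoted from Kr\"{o}ger \cite{Kr}---so the comparison is with the cited source. Your architecture is essentially Kr\"{o}ger's: project onto a segment realising the diameter, use Brunn--Minkowski to get concavity of $g=A^{1/(n-1)}$, reduce via the coarea/Fubini identity and min--max to the weighted one-dimensional problem, chop $[0,D]$ into $\asymp k$ pieces carrying disjointly supported test functions, and let the cone weight $t^{n-1}$ (equivalently the Bessel zero $j_{n/2-1,1}\asymp n$) account for the factor $n^2$. The reduction $\lambda_k(\Omega)\le\lambda_k([0,D],\nu)$, the disjoint-support bound, the bulk estimate via $\phi=A^{-1/2}\psi$ together with $V\le\frac{(n-1)^2}{4}(g'/g)^2$ and $|g'|/g\le\max\{1/t,1/(D-t)\}\lesssim k/D$ on $[s_1,s_k]$ are all correct.

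The genuine gap is the tip estimate, and specifically your test function $\phi_0=g(s_1)-g$. Its Rayleigh quotient is \emph{not} $\lesssim n^2/s_1^2$ for general concave $g$, and the linear profile is \emph{not} extremal for it. Take $g(t)=\min(Mt,1)$ with $1/M\ll s_1$ (a thin cylinder with a sharp conical cap; for the cross-sectional radius small enough the diameter is attained along the axis, so this profile occurs). Then $\phi_0=(1-Mt)_+$ is supported on $[0,1/M]$, the numerator is $\int_0^{1/M}M^2(Mt)^{n-1}\,dt=M/n$, the denominator is $\int_0^{1/M}(1-Mt)^2(Mt)^{n-1}\,dt=\frac{2}{Mn(n+1)(n+2)}$, and the quotient is $\asymp M^2n^2$, which is unbounded as $M\to\infty$; the true mixed eigenvalue on $[0,s_1]$ for this weight is $\asymp 1/s_1^2$ since the measure is essentially uniform. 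So the concentration of mass near the tip is not the issue---the issue is that your $\phi_0$ collapses onto the tiny cap. The repair is to test with $\phi_0(t)=s_1-t$ (or the cone's actual Bessel eigenfunction), for which one must then prove the lemma you correctly identify as the crux: for $g$ concave and nondecreasing on $[0,s_1]$, $\int_0^{s_1}(s_1-t)^2g^{n-1}\,dt\ge\frac{c\,s_1^2}{n^2}\int_0^{s_1}g^{n-1}\,dt$. This does hold---one way is to observe that $u\mapsto g(s_1-u)/(1-u/s_1)$ is nondecreasing by concavity, so the measure $g(s_1-u)^{n-1}\,du$ stochastically dominates the cone measure $(1-u/s_1)^{n-1}\,du$, whose second moment is $\frac{2s_1^2}{(n+1)(n+2)}$---together with a separate (and easier, since then $|g'|/g\lesssim 1/D$ on the decreasing part) treatment of the case where $g$ attains its maximum inside $(0,s_1)$. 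As written, though, the one load-bearing estimate of the proof is attached to a test function for which the claimed bound is false.
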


In order to construct a `good' partition we recall a Voronoi partition of a metric space. Let $X$ be a metric space and $\{x_{\alpha}\}_{\alpha\in I}$ be a subset of
  $X$. For each $\alpha \in I$ we define the \emph{Voronoi cell} $C_{\alpha}$
  associated with the point $x_{\alpha}$ as
  \begin{align*}
   C_{\alpha}:= \{x\in X \mid d(x,x_{\alpha})\leq d(x,x_{\beta}) \text{ for all
   }\beta\neq \alpha   \}.
\end{align*}If $X$ is a bounded convex domain $\Omega$ in a Euclidean space then $\{C_{\alpha}\}_{\alpha\in I}$ is a convex partition of
   $\Omega$ (the boundaries $\partial C_{\alpha}$ may overlap each
   other). Observe also that if the balls $\{ B(x_{\alpha},r)\}_{\alpha\in I}$ of
   radius $r$ covers $\Omega$ then $C_{\alpha} \subseteq B(x_{\alpha},r)$, and thus
   $\diam (C_{\alpha} )\leq 2r$ for any $\alpha\in I$.

\section{Proof of Theorems \ref{Keythm} and \ref{Mthm}}
We use the following key lemma to prove Theorem \ref{Keythm}.
\begin{lem}[{\cite[Lemma 3.1]{F3}}]\label{keylem}Let $\Omega$ be a bounded convex domain in $\mathbb{R}^n$ with a piecewise smooth boundary. Given $r>0$ suppose that $\{x_{\alpha}\}_{\alpha=0}^{l}$ is $r$-separated points in $\Omega$, i.e., $d(x_{\alpha},x_{\beta})\geq r$ for distinct $\alpha$, $\beta$. Then we have
\begin{align*}
    r\lesssim \frac{n}{\sqrt{\lambda_l(\Omega)}}.
\end{align*}
\end{lem}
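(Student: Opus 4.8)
The plan is to bound the number $l+1$ of $r$-separated points from above in terms of $r$ and $\lambda_l(\Omega)$, and then invert. Suppose $\{x_\alpha\}_{\alpha=0}^l$ is an $r$-separated family in $\Omega$. Consider the balls $B(x_\alpha, r/4)$; by the triangle inequality these are pairwise disjoint, and being convex subsets of $\Omega$ they are themselves bounded convex domains with diameter at most $r/2$. Pass to the Voronoi partition $\{C_\alpha\}$ of $\Omega$ associated with the points $\{x_\alpha\}$: this is a partition of $\Omega$ into $l+1$ convex subdomains (their boundaries meeting in measure zero), so Proposition \ref{lower} gives
\begin{align*}
    \lambda_l(\Omega)\geq \min_\alpha \lambda_1(C_\alpha).
\end{align*}
It therefore suffices to bound $\lambda_1(C_\alpha)$ from below, i.e. to bound $\diam(C_\alpha)$ from above, uniformly in $\alpha$, by something like a universal multiple of $r$.

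The heart of the matter is the diameter bound on the Voronoi cells. This does not follow from the separation hypothesis alone — a priori a cell could be long and thin — so one needs to exploit convexity of $\Omega$ together with a volume (packing) argument. The idea: if some cell $C_{\alpha_0}$ had diameter $D$, pick $y\in C_{\alpha_0}$ realizing (nearly) this diameter from $x_{\alpha_0}$; along the segment $[x_{\alpha_0},y]\subset\Omega$ one can place roughly $D/r$ points that are $r/2$-separated from each other and from the original family, and then a Bishop--Gromov / volume-doubling estimate (this is presumably the content of the cited \cite[Lemma 3.1]{F3}, and the mechanism the Remark alludes to) bounds how the volumes of the surrounding balls grow, forcing $D\lesssim n r$ — the $n$ entering through the $n$-dependence of the volume ratio of concentric balls in $\mathbb{R}^n$. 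Combining, $\diam(C_\alpha)\lesssim n r$, hence by Theorem \ref{PW est}
\begin{align*}
    \lambda_1(C_\alpha)\geq \frac{\pi^2}{\diam(C_\alpha)^2}\gtrsim \frac{1}{n^2 r^2},
\end{align*}
and with the Proposition \ref{lower} estimate this yields $\lambda_l(\Omega)\gtrsim 1/(n^2 r^2)$, i.e. $r\lesssim n/\sqrt{\lambda_l(\Omega)}$, as claimed.

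I expect the main obstacle to be precisely the uniform diameter bound $\diam(C_\alpha)\lesssim n r$ for the Voronoi cells: the separation of the centers controls the cells from the inside ($C_\alpha$ contains no point closer to another center), but controlling them from the outside requires knowing that $\Omega$ itself is not much larger, in the relevant directions, than a ball of radius comparable to $r$ around $x_\alpha$ — and that is exactly where convexity plus a dimension-dependent volume comparison has to be invoked. Since the lemma is quoted verbatim from \cite[Lemma 3.1]{F3}, the proof here can simply cite it; but if one wanted to reproduce it, the packing-along-a-segment argument sketched above, made quantitative via the volume ratio $\vol B(x,R)/\vol B(x,r)\leq (R/r)^n$ for balls in Euclidean space (or Bishop--Gromov in the nonnegative-Ricci case), is the route I would take, and the bookkeeping of how many separated points fit and how the constant $n$ propagates is the one genuinely delicate step.
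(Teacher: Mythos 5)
Your argument runs in the wrong direction, and the final algebraic step conceals the mistake. The lemma asserts $r\lesssim n/\sqrt{\lambda_l(\Omega)}$, which after rearranging is an \emph{upper} bound $\lambda_l(\Omega)\lesssim n^2/r^2$. Your route, via Proposition \ref{lower} applied to the Voronoi partition and then Payne--Weinberger on the cells, produces $\lambda_l(\Omega)\geq\min_\alpha\lambda_1(C_\alpha)\gtrsim 1/(n^2r^2)$, which is a \emph{lower} bound on $\lambda_l(\Omega)$. Rearranged correctly, $\lambda_l(\Omega)\gtrsim 1/(n^2r^2)$ gives $r\gtrsim 1/(n\sqrt{\lambda_l(\Omega)})$, not $r\lesssim n/\sqrt{\lambda_l(\Omega)}$; the last line of your proof inverts the inequality incorrectly. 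So even if every intermediate step were granted, the conclusion would be the converse of what is claimed. There is also a secondary problem: the cell-diameter bound $\diam(C_\alpha)\lesssim nr$ is false for a general $r$-separated family. Diameter control on Voronoi cells requires the centers to be a \emph{maximal} $r$-separated set (an $r$-net), as the paper uses explicitly in the proof of Theorem \ref{Mthm}; with only two well-separated points in a very long convex tube, each cell has diameter comparable to the tube length, not to $r$, and no packing argument along a segment repairs this.

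Since you are proving an upper bound on $\lambda_l(\Omega)$, the natural tool is the min--max variational characterization with disjointly supported test functions, not a spectral-partition lower bound. Take the balls $B(x_\alpha,r/2)$, which are pairwise disjoint by $r$-separation, and let $\phi_\alpha$ be $1$ on $B(x_\alpha,(1-1/n)r/2)\cap\Omega$, decaying linearly to $0$ on the annulus up to radius $r/2$, and $0$ outside. Then $|\nabla\phi_\alpha|\lesssim n/r$ on an annulus of width $r/(2n)$, and the Bishop--Gromov inequality for convex domains gives
\begin{align*}
\frac{\vol\bigl(B(x_\alpha,r/2)\cap\Omega\bigr)}{\vol\bigl(B(x_\alpha,(1-1/n)r/2)\cap\Omega\bigr)}\leq\Bigl(\frac{1}{1-1/n}\Bigr)^n\lesssim 1,
\end{align*}
so each Rayleigh quotient is $\lesssim n^2/r^2$. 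Since the $\phi_\alpha$ have pairwise disjoint supports, they span an $(l+1)$-dimensional subspace on which the Rayleigh quotient is at most $\max_\alpha R(\phi_\alpha)$, whence $\lambda_l(\Omega)\lesssim n^2/r^2$, i.e. $r\lesssim n/\sqrt{\lambda_l(\Omega)}$. The role of Bishop--Gromov (flagged in the paper's Remark) is precisely to keep the annulus-to-core volume ratio bounded independently of $n$ while the annulus is thin enough to pay only an $n^2$ (rather than exponential-in-$n$) price in the gradient term.
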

\begin{proof}[Proof of Theorem \ref{Keythm}]Suppose that there is a sequence $\{ A_{\alpha}\}_{\alpha=0}^{k}$ of Borel subsets such that 
\begin{align*}
     \lambda_k(\Omega)\geq \frac{c n^2}{(\mathcal{D}(\{A_{\alpha}\})\log (k+1))^2}\max_{\alpha=0,\cdots,k} \Big(\log \frac{\vol (\Omega)}{\vol(A_{\alpha})}\Big)^2
\end{align*}for sufficiently large $c>0$. Since $(k+1)\vol(A_{\alpha})\leq \vol(\Omega)$ for some $\alpha$ we have
\begin{align}\label{key}
    \mathcal{D}(\{A_{\alpha}\})\geq \frac{c n}{\sqrt{\lambda_k(\Omega)}}=:r_0.
\end{align}For each $\alpha$ we fix a point $x_{\alpha}\in A_{\alpha}$. The sequence $\{x_{\alpha}\}_{\alpha=0}^{k}$ is then $r_0$-separated in $\Omega$ by (\ref{key}). 
By virtue of Lemma \ref{keylem} we get
\begin{align*}
   \frac{c n}{\sqrt{\lambda_k(\Omega)}}= r_0 \lesssim \frac{n}{\sqrt{\lambda_k(\Omega)}}.
\end{align*}For sufficiently large $c$ this is a contradiction. This completes the proof of the theorem.

\end{proof}
We can reduce the number of $\{A_{\alpha}\}$ in Theorem \ref{Keythm} as follows. 
\begin{lem}\label{lelemma} Let $\Omega$ be a convex domain in $\mathbb{R}^n$ and $\{A_{\alpha}\}_{\alpha=0}^{k-1}$ be a sequence of Borel subsets of $\Omega$. Then we have
\begin{align*}
    \lambda_k(\Omega)\lesssim \frac{n^2}{(\mathcal{D}(\{A_{\alpha}\})\log (k+1))^2}\max_{\alpha=0,\cdots,k-1} \Big(\log \frac{\vol(\Omega)}{\vol(A_{\alpha})}\Big)^2.
\end{align*}
\end{lem}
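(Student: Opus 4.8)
The plan is to deduce Lemma~\ref{lelemma} from Theorem~\ref{Keythm} by manufacturing one extra Borel set out of the ambient domain, so that a $k$-term family becomes a $(k+1)$-term family to which Theorem~\ref{Keythm} applies with index $k$. Concretely, given $\{A_\alpha\}_{\alpha=0}^{k-1}$, I would produce a Borel set $A_k \subseteq \Omega$ which is simultaneously far from every $A_\alpha$ (so that $\mathcal{D}(\{A_\alpha\}_{\alpha=0}^{k})$ is comparable to $\mathcal{D}(\{A_\alpha\}_{\alpha=0}^{k-1})$) and not too small in volume (so that $\log(\vol(\Omega)/\vol(A_k))$ does not spoil the maximum). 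The natural candidate is a small ball: pick a point $x_k \in \Omega$ realizing, say, the interior structure away from the given sets, and set $A_k = B(x_k, \rho) \cap \Omega$ for a radius $\rho$ of order $\mathcal{D}(\{A_\alpha\}_{\alpha=0}^{k-1})$ (or a fixed small multiple thereof). Applying Theorem~\ref{Keythm} to $\{A_\alpha\}_{\alpha=0}^{k}$ then yields
\begin{align*}
    \lambda_k(\Omega)\lesssim \frac{n^2}{(\mathcal{D}(\{A_\alpha\}_{\alpha=0}^{k})\log(k+1))^2}\max_{\alpha=0,\cdots,k}\Big(\log\frac{\vol(\Omega)}{\vol(A_\alpha)}\Big)^2,
\end{align*}
and the task reduces to controlling the two quantities that now run up to $\alpha = k$ in terms of their counterparts running only up to $\alpha = k-1$.

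For the separation, I would choose $x_k$ far from $\bigcup_{\alpha<k} A_\alpha$: if some point of $\Omega$ lies at distance $\geq c_1\,\mathcal{D}(\{A_\alpha\}_{\alpha=0}^{k-1})$ from every $A_\alpha$ we use it, and otherwise $\Omega$ itself has diameter comparable to $\mathcal{D}(\{A_\alpha\}_{\alpha=0}^{k-1})$, in which case $\lambda_k(\Omega)$ is already bounded above via Theorem~\ref{Cheng} (with $\diam(\Omega)\lesssim \mathcal{D}(\{A_\alpha\})$) and the desired inequality follows directly — so this degenerate case is handled separately and need not be forced into the ball construction. In the main case, taking $\rho$ a small enough multiple of $\mathcal{D}(\{A_\alpha\}_{\alpha=0}^{k-1})$ keeps $A_k$ at distance $\gtrsim \mathcal{D}(\{A_\alpha\}_{\alpha=0}^{k-1})$ from each $A_\alpha$, so $\mathcal{D}(\{A_\alpha\}_{\alpha=0}^{k})\gtrsim \mathcal{D}(\{A_\alpha\}_{\alpha=0}^{k-1})$; the reverse inequality $\mathcal{D}(\{A_\alpha\}_{\alpha=0}^{k})\leq \mathcal{D}(\{A_\alpha\}_{\alpha=0}^{k-1})$ is trivial since we only added a set. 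Thus the denominator changes by at most a universal constant, absorbed into $\lesssim$.

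For the volume term, the point is that $A_k = B(x_k,\rho)\cap\Omega$ with $x_k \in \Omega$ and $\rho$ of order $\mathcal{D}(\{A_\alpha\}_{\alpha=0}^{k-1})$ cannot be microscopic relative to $\Omega$: since the $A_\alpha$ are pairwise $\mathcal{D}(\{A_\alpha\})$-separated and there are $k$ of them, by a packing argument (Lemma~\ref{keylem}, or simply the existence of a $\mathcal{D}$-separated $k$-point set forcing $\mathcal{D}\lesssim n/\sqrt{\lambda_{k-1}(\Omega)}\lesssim n\,\diam(\Omega)/\sqrt{?}$) one controls $\diam(\Omega)$ from above in terms of $\mathcal{D}(\{A_\alpha\})$ and $n$, hence $\vol(\Omega)/\vol(A_k) \leq (\diam(\Omega)/\rho)^n \lesssim$ an explicit power, giving $\log(\vol(\Omega)/\vol(A_k)) \lesssim n\log(\text{something}) \lesssim n\log(k+1)$ or better; more cleanly, one can simply note $\log(\vol(\Omega)/\vol(A_k))$ is bounded by $C n$ times a logarithm that is in turn dominated by $\max_{\alpha<k}\log(\vol(\Omega)/\vol(A_\alpha))$ up to the extra $n$, which then disappears because the right side of the target inequality already carries an $n^2$. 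I expect the main obstacle to be exactly this last bookkeeping: ensuring that the newly introduced $\log(\vol(\Omega)/\vol(A_k))$ factor is genuinely dominated — with only universal constants — by $n$ and the original maximum, without inadvertently introducing a $\log k$ or a further power of $n$ that would weaken the stated bound. I would handle it by being slightly generous in the choice of $\rho$ and invoking Theorem~\ref{Cheng} to convert any residual $\diam(\Omega)$-dependence into a bound on $\lambda_k(\Omega)$ directly whenever $A_k$ would otherwise be too small.
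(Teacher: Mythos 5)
The paper proves this lemma by a single citation: combine Theorem~\ref{Keythm} with a general \emph{reduction-of-subsets} theorem, namely \cite[Theorem~3.4]{F2}, which is a result engineered precisely to let $k$ well-separated sets bound $\lambda_k$ rather than $\lambda_{k-1}$. Your proposal instead tries to prove the reduction from scratch by manufacturing an extra Borel set $A_k = B(x_k,\rho)\cap\Omega$. That is a genuinely different route, but as sketched it has two real gaps.

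First, the dichotomy you use to place $x_k$ does not hold. You claim that if no point of $\Omega$ lies at distance $\gtrsim \mathcal{D}(\{A_\alpha\})$ from every $A_\alpha$, then $\diam(\Omega)\lesssim \mathcal{D}(\{A_\alpha\})$. This is false: take $\Omega$ a long thin box $[0,L]\times[0,\varepsilon]^{n-1}$ and let $A_0$, $A_1$ be the two halves minus a middle slab of width $\mathcal{D}$. No point of $\Omega$ is at distance more than $\mathcal{D}/2$ from both $A_0$ and $A_1$, yet $\diam(\Omega)\approx L$ can be arbitrarily large compared to $\mathcal{D}$. So the fallback to Theorem~\ref{Cheng} via ``$\diam(\Omega)\lesssim\mathcal{D}$'' is unavailable, and the main-case construction (a far point $x_k$) is also unavailable; your argument simply has no branch that covers this configuration.

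Second, even in the favorable case where a suitable $x_k$ exists, the volume bookkeeping does not close. From Bishop--Gromov you get $\log\bigl(\vol(\Omega)/\vol(A_k)\bigr)\lesssim n\log\bigl(\diam(\Omega)/\rho\bigr)$, and you propose to dominate this by $n\cdot\max_{\alpha<k}\log\bigl(\vol(\Omega)/\vol(A_\alpha)\bigr)$. But then $\max_{\alpha\le k}\bigl(\log \vol(\Omega)/\vol(A_\alpha)\bigr)^2 \lesssim n^2\max_{\alpha<k}\bigl(\log \vol(\Omega)/\vol(A_\alpha)\bigr)^2$, and feeding this into Theorem~\ref{Keythm} yields a bound with prefactor $n^4$, not $n^2$. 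The remark that ``the extra $n$ disappears because the right side already carries an $n^2$'' has the arithmetic backwards: the existing $n^2$ gets multiplied by your extra $n^2$, it does not absorb it. So even if the placement of $x_k$ were repaired, this approach would prove a strictly weaker inequality than the one stated. The reduction really does require the dedicated machinery of \cite[Theorem~3.4]{F2}, which is why the paper outsources it rather than arguing directly.
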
The above lemma follows from Theorem \ref{Keythm} and \cite[Theorem 3.4]{F2}.

To prove Theorem \ref{Mthm} let us recall the Bishop-Gromov inequality in Riemannian geometry. See \cite[Lemma 3.4]{F3} for the proof in the case of convex domains in $\mathbb{R}^n$.
\begin{lem}[{Bishop-Gromov inequality}]Let $\Omega$ be a convex domain
 in $\mathbb{R}^n$. Then for any $x\in \Omega$ and any $R>r>0$ we have
 \begin{align*}
 \frac{ \vol (B(x,r)\cap \Omega)}{\vol (B(x,R)\cap \Omega)}\geq \Big(\frac{r}{R}\Big)^n.
  \end{align*}
  \end{lem}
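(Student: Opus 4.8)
The plan is to use convexity to reduce the inequality to a one–dimensional comparison along the rays emanating from $x$. Since $\Omega$ is a domain, $x$ is an interior point, and since $\Omega$ is convex, for every unit vector $u\in\mathbb{S}^{n-1}$ the set $\{t>0\mid x+tu\in\Omega\}$ is an interval $(0,\rho(u))$ with $\rho(u)\in(0,+\infty]$ (and $\rho(u)<\infty$ whenever $\Omega$ is bounded). Thus $\Omega$ is star-shaped with respect to $x$, and the radial function $\rho\colon\mathbb{S}^{n-1}\to(0,+\infty]$ is lower semicontinuous, hence measurable, because $\Omega$ is open.

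First I would express both volumes in polar coordinates centered at $x$. Up to a set of measure zero one has $B(x,s)\cap\Omega=\{x+tu\mid u\in\mathbb{S}^{n-1},\ 0<t<\min(s,\rho(u))\}$, so polar integration gives
\begin{align*}
\vol(B(x,s)\cap\Omega)=\frac1n\int_{\mathbb{S}^{n-1}}\min(s,\rho(u))^{n}\,du
\end{align*}
for every $s>0$; note the integrand is finite even at directions with $\rho(u)=+\infty$, and the left-hand side is positive for $s=R$ since $x$ is interior.

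The core of the argument is the pointwise estimate that for every $u\in\mathbb{S}^{n-1}$ and all $R>r>0$,
\begin{align*}
\min(r,\rho(u))\ \geq\ \frac{r}{R}\,\min(R,\rho(u)).
\end{align*}
This follows from a short case analysis on the position of $\rho(u)$: if $\rho(u)\geq R$ both sides equal $r$; if $r\leq\rho(u)<R$ the left side is $r$ while the right side is $(r/R)\rho(u)<r$; and if $\rho(u)<r$ the left side is $\rho(u)$ while the right side is $(r/R)\rho(u)<\rho(u)$. Raising this to the $n$th power, integrating over $\mathbb{S}^{n-1}$, using the volume formula above, and dividing by $\vol(B(x,R)\cap\Omega)$ yields the claimed inequality.

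I do not expect a genuine obstacle here: the argument is entirely elementary once convexity is used to put $\Omega$ in star-shaped position about $x$. The only points requiring a little care are the measurability of $\rho$ and the justification of the polar-coordinate volume identity in the presence of a merely piecewise smooth boundary and of directions with $\rho(u)=+\infty$, both of which are handled by the lower semicontinuity of $\rho$ and by the fact that the radial representation of $B(x,s)\cap\Omega$ holds up to a null set.
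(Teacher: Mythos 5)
Your argument is correct. Note that the paper does not actually reprove this lemma; it cites \cite[Lemma 3.4]{F3} for a proof tailored to convex domains in $\mathbb{R}^n$, so there is no in-paper proof to compare against line by line. Your route — parametrize $\Omega$ radially about the interior point $x$, write $\vol(B(x,s)\cap\Omega)=\frac1n\int_{\mathbb{S}^{n-1}}\min(s,\rho(u))^n\,du$, and then use the elementary pointwise bound $\min(r,\rho(u))\ge\frac{r}{R}\min(R,\rho(u))$ — is sound, and your case analysis, the lower semicontinuity of $\rho$, and the handling of $\rho(u)=+\infty$ are all in order. One remark worth making: the same conclusion follows in one line from the homothety $\phi(y)=x+\frac{r}{R}(y-x)$. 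Convexity of $\Omega$ gives $\phi(y)=(1-\tfrac{r}{R})x+\tfrac{r}{R}y\in\Omega$ for $y\in\Omega$, and $|\phi(y)-x|=\tfrac{r}{R}|y-x|<r$ for $y\in B(x,R)$, so $\phi(B(x,R)\cap\Omega)\subseteq B(x,r)\cap\Omega$; since $\det D\phi=(r/R)^n$, the volume inequality is immediate. This is really the same mechanism as yours (a radial contraction), but it avoids the polar-coordinate bookkeeping and the measurability discussion entirely, which is why it is the form typically found in the literature. Both arguments correctly show that no boundary regularity is needed, only convexity and $x\in\Omega$.
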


  In the proof of Theorem \ref{Mthm} we make use of a similar argument as in \cite[Theorem 1.3]{F1}. 
\begin{proof}[Proof of Theorem \ref{Mthm}]
    Let
 $R:=cn^2/\sqrt{\lambda_{k+1}(\Omega)}$ where $c$ is a positive number
 specified later. Suppose that $\Omega$ includes $k+1$ $R$-separated net $\{x_{\alpha}\}_{\alpha=0}^{k}$ in
 $\Omega$. By Theorem \ref{Cheng} we have $\diam (\Omega) \leq c'n(k+1)/\sqrt{\lambda_{k+1}(\Omega)}$ for some universal constant $c'>0$. Applying the Bishop-Gromov inequality we have
 \begin{align*}
     \frac{\vol(B(x_{\alpha},R)\cap \Omega)}{\vol(\Omega)}\geq \frac{R^n}{(\diam \Omega)^n}\geq \Big(\frac{c}{c'(k+1)}\Big)^n\geq \frac{1}{(k+1)^n}
 \end{align*}for $c>c'$. By Lemma \ref{lelemma} we obtain 
 \begin{align*}
     \lambda_{k+1}(\Omega)\lesssim \frac{n^2(\log (k+1)^n)^2}{(\mathcal{D}(\{B(x_{\alpha},R)\cap \Omega\})\log (k+2))^2}\lesssim \frac{n^4}{R^2}=\frac{1}{c}\lambda_{k+1}(\Omega).
 \end{align*}For sufficiently large $c$ this is a
 contradiction. 
 
  Let $x_0, x_1,x_2,\cdots,x_l$ be maximal $R$-separated points in
   $\Omega$, where $l\leq k-1$. By the
 maximality we have $\Omega\subseteq
   \bigcup_{\alpha=0}^{l} B(x_{\alpha},R)$. If $\{
   \Omega_{\alpha} \}_{\alpha=0}^{l}$ is the Voronoi partition of $\Omega$ associated with
   $\{x_{\alpha}\}$ then we have $\diam (\Omega_{\alpha})\leq 2R$. Theorem \ref{PW est} thus yields $\lambda_1(\Omega_{\alpha})\gtrsim 1/R^2$ for each $\alpha$. According to Proposition \ref{lower} we obtain
   \begin{align*}
       \lambda_k(\Omega)\geq \min_{\alpha}\lambda_1(\Omega_{\alpha})\gtrsim \frac{1}{R^2}\gtrsim \frac{\lambda_{k+1}(\Omega)}{n^4}.
   \end{align*}This completes the proof of the theorem.
\end{proof}

%%%%%%%%%%%%%%%%%%%%%%%%%%%
%%%%%%%%%%%%%%%%%%%%%%%%%%%
%%%%%%%%%%%%%%%%%%%%%%%%%%%

%%%%%%%%%%%%%%%%%%%%%%%%

%%%%%%%%%%%%%%%%%%%%%%%%%%%%
%%%%%%%%%%%%%%%%%%%%%%%%%%%%
%%%%%%%%%%%%%%%%%%%%%%%%%%%%

%%%%%%%%%%%%%%%%%%%%%%%%

%%%%%%%%%%%%%%%%%%%%%%%%%%%%
%%%%%%%%%%%%%%%%%%%%%%%%%%%%
%%%%%%%%%%%%%%%%%%%%%%%%%%%%

%\subsection*{{\rm Acknowledgements}}
%\bigbreak
%\noindent
{\it Acknowledgments.}
The author would like to express many thanks to the anonymous referees for their helpful and useful comments.  

%%%%%%%%%%%%%%%%%%%%%%%%%%%%

\end{document}